\documentclass[a4paper]{article}
\usepackage{amsmath}
\usepackage{amsfonts}
\usepackage{graphicx}
\usepackage{caption}
\captionsetup[table]{name=Figure}
\usepackage{comment}
\usepackage{pstricks, pst-node}
\usepackage{epstopdf}
\usepackage[caption=false]{subfig}
\usepackage[a4paper]{geometry}
\geometry{top=1.5cm, bottom=1.0cm, left=2cm, right=2.5cm}
\makeatother
\newtheorem{proposition}{Proposition}[section]
\newtheorem{theorem}{Theorem}[section]
\newenvironment{proof}[1][Proof]{\textbf{#1.} }{\ \rule{0.5em}{0.5em}}
\newcommand{\pai}{\left(}
\newcommand{\pad}{\right)}
\newcommand{\ci}{\left[}
\newcommand{\cd}{\right]}
\newcommand{\p}{\mathbb{P}}


\title{Mathematical modelling of the performance of a student in non-collaborative and non-presential learning}
\author{A.R. Sagaceta-Mej\'ia$^{(1)}$, J. A. Fres\'an-Figueroa$^{(2)}$ and E.M. Mart\'in-Gonz\'alez.$^{(3)}$\\
$^{(1)}$ Departamento de Física y Matemáticas, Universidad Iberoamericana, México\\
e-mail: alma.sagaceta@ibero.mx, \\
$^{(2)}$ Departamento de Matemáticas Aplicadas y Sistemas, \\Universidad Autónoma Metropolitana, Unidad Cuajimalpa, México, \\
e-mail: jfresan@cua.uam.mx,\\
$^{(3)}$ Departamento de Matem\'aticas, Universidad de Guanajuato, M\'exico\\
e-mail:  ehyter.martin@ugto.mx}


\begin{document}

\maketitle

\begin{abstract}
	In this paper we propose a model to study the appropriation of knowledge of one student in a non-collaborative online class. We formulate a stochastic model based on the quality of the teacher's class and the affinity of the student to understand the sessions, under the assumption that previous sessions have some influence in the understanding of the next sessions. This assumption implies that the process is not even a Markov process. This kind of situation appears in seminars and classes with many different sessions. We derive some recursive expressions for the distribution of the number of sessions that the student comprehends. Furthermore, we study the convergence of this distribution and study the speed of this convergence through some numerical examples.
\end{abstract}

\section{Introduction}

Simple methods from physics and mathematics have been recently adopted to model, as mathematical metaphors, a wide range of social phenomena and social systems \cite{cajueiro,klemm,li_2018,naka,ucam,stauffer}. However, to the best of our knowledge, this kind of models have not been comprehensively used to study the way students learn in a certain class. How the behaviour of students affects how well they learn is an old question and has been widely studied in several contexts since people started to concern about teaching and pedagogy \cite{Dufresne,lee2007,McCroskey,Richmond}. Currently, in the context of the \textit{forced} transition to online education due to the worldwide health alert, providing an answer to this important issue and other closely related problems could legitimize or give birth to new learning techniques. After this epidemic it can be expected that more capital will be invested to online teaching and blended learning. Even though moving to a total online or blended learning model will be a long process, it is important to  study the intrinsic behaviour of students to provide better answers to the challenges ahead.

In this work we proffer to answer the following question:
\begin{itemize}
	\item \textit{How does a student, non-interacting with others, understands according to their affinity to the sessions, during a course?}
\end{itemize}

Although general behaviors could be modeled as particles or agents (students) that interact and exchange "something" (knowledge), in the current environment of social distancing; it is a known fact that many students have resented this way of living in "solitude" particularly in the classroom. This way of being in solitude presents new opportunities and challenges when trying to measure how the student understand a class session. In the present work we wish to model the possible appropriation of knowledge, whose easiness varies across the  student's understanding throughout the course and the complexity of each session if they do not interact with their classmates.

We assume that the course is presented in a number of sessions such that session $j$ helps the student's understanding of session $j+1$. This assumption implies that the given model does not rely on a Markov process \cite{Markov}, because session $j$ depends strongly on all the $j-1$ previous sessions.

This work is organized as follows: A general description of the model is given in  Section 2. The mathematical manipulation of the model and some results are presented in Section 3. Afterwards, we study some particular cases in Section 4 and present some numerical examples of the mode. Finally, conclusions and future work appear in Section 5.

\section{Description of the Model}

We consider the situation in which the course consists of $n$ sessions and the learning process of each student in the same course is independent of the other students.

Under this situation, we assume that the lecturer teaches each session according to a measurable parameter $q$, which represents the quality of the sessions. Therefore we refer to $q$ as the \textit{quality parameter} of each session. In this work we only consider the case when this parameter remains constant along the whole course.

The event in which the student understands the first session has a probability given by $\overline{F}(1-q),$ for a certain probability distribution $F$, where $\overline{F}:=1-F$.

From the second session until the end of the course, if the student \textit{understood} session $j$, they understand the next one with probability $\overline{F}_{j+1}(1-q):=\overline{F}_j(1-q-\varepsilon)$. Here $F_1$ refers to some initial distribution $F$ and each $F_j$ for $j\geq 2$ is constructed conditioned on the result of all the previous $j-1$ sessions.

Similarly if the student \textit{did not understand} session $j$, they understand the next one with probability $\overline{F}_{j+1}(1-q):=\overline{F}_j(1-q+\varepsilon)$.

The parameter $\varepsilon$ is assumed to be positive and fixed during the entire course and it reflects how the comprehension of the content of a session influences the next session. Hence we call $\varepsilon$ the \textit{dependence} parameter. In some courses this dependence parameter will be relatively big in comparison with $n$, like in a Calculus class.  But in many other cases, $\varepsilon$ will be relatively small compared to $n$, like in seminaries, or panoramic courses.

We wish to avoid the situation when the student understands the last sessions of the course with probability 1, due to the cumulative effect of $\varepsilon$ after some point, therefore we assume that $\varepsilon$ is $o(f(n))$,  for a properly chosen function $f$ depending on the total number of sessions $n$.

In the following sections we manipulate this model to obtain some results related to the distribution of the number of sessions that the student understands along the course.

\section{Main results}

We let $Y(j)$ be Bernoulli random variables defined as follows:
For session 1, $Y(1)$ is Bernoulli with parameter $\overline{F}(1-q)$ for the given initial continuous distribution $F$.

For session 2, by the Law of Total Probability we have:

\begin{align}
	\mathbb{P}\left[Y(2)=1\right]&=\mathbb{P}\left[Y(2)=1|Y(1)=1\right]\overline{F}(1-q)+\mathbb{P}\left[Y(2)=1|Y(1)=0\right]F(1-q)\label{Ytema2}
\end{align}

Given that the student understood session 1, the probability that they understand session 2 is given by

$$\mathbb{P}\left[Y(2)=1|Y(1)=1\right]=\overline{F}(1-q-\varepsilon).$$

Similarly, if the student did not understand session 1, the probability that they understand session 2 is

$$\mathbb{P}\left[Y(2)=1|Y(1)=0\right]=\overline{F}(1-q-\varepsilon).$$

It follows that (\ref{Ytema2}) is equivalent to

\begin{align}
	\mathbb{P}\left[Y (2)=1\right]&=\overline{F}(1-q-\varepsilon)\overline{F}(1-q)+\overline{F}(1-q+\varepsilon)F(1-q).
\end{align}

For the general setting, we denote by $F_j$ the probability distribution such that

$$\overline{F}_j(x)=\overline{F}_{j-1}(x-\varepsilon)p_{j-1}+\overline{F}_{j-1}(x+\varepsilon)\left(1-p_{j-1}\right),$$

provided that $\overline{F}_{j-1}(x-\varepsilon)$ and $\overline{F}_{j-1}(x+\varepsilon)$ do not equal zero or one. In the equation above, $p_k:=\p\ci Y(k)=1\cd$ for $k>1$ with $p_1=\overline{F}(1-q)$ and $F_1:=F$.

Using this notation we obtain

\begin{equation}\label{pj_general}
	p_j=\overline{F}_{j-1}\big(1-q-\varepsilon\big)p_{j-1}+\overline{F}_{j-1}\big(1-q+\varepsilon\big)(1-p_{j-1}).
\end{equation}

We derive a recursive expression for the probabilities $\{p_m,1<m\leq n\}$, which is given in the following result.

\begin{theorem} \label{Teo1}
	Let $n$ and $\varepsilon$ be such that $\left(n-1\right)\varepsilon<\min\left\{ 1-q,q\right\} $,
	then the general expression for $\{p_m,1<m\leq n\}$ reads

	$$p_{m}  =\overline{F}_{1}\left(1-q\right)\prod_{j=1}^{m-1}\left[\overline{F}_{j}\left(1-q-\varepsilon\right)-\overline{F}_{j}\left(1-q+\varepsilon\right)\right]+\sum_{i=1}^{m-1}\overline{F}_{i}\left(1-q+\varepsilon\right)\prod_{j=i+1}^{m-1}\left[\overline{F}_{j}\left(1-q-\varepsilon\right)-\overline{F}_{j}\left(1-q+\varepsilon\right)\right].$$
\end{theorem}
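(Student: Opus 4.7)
The strategy is to recognize equation (\ref{pj_general}) as an affine first-order recurrence in $p_{j-1}$, whose closed form is standard, and then match it term-by-term with the formula claimed in the theorem. Concretely, set
$$a_j:=\overline{F}_{j}(1-q-\varepsilon)-\overline{F}_{j}(1-q+\varepsilon),\qquad b_j:=\overline{F}_{j}(1-q+\varepsilon),$$
so that (\ref{pj_general}) becomes $p_j=a_{j-1}\,p_{j-1}+b_{j-1}$ with $p_1=\overline{F}_1(1-q)$. The claim is then that
$$p_m=p_1\prod_{j=1}^{m-1}a_j+\sum_{i=1}^{m-1}b_i\prod_{j=i+1}^{m-1}a_j,$$
which is exactly the formula in the theorem after substituting back the definitions of $a_j$ and $b_j$.

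I would prove this by induction on $m$. The base case $m=2$ is immediate: the product in the first term reduces to $a_1$, the sum to $b_1$, yielding $p_2=a_1 p_1+b_1$, which agrees with the already displayed formula for $\p[Y(2)=1]$. For the inductive step, assuming the formula for $p_{m-1}$, multiply by $a_{m-1}$ and add $b_{m-1}$; the factor $a_{m-1}$ pushes the index of each product up by one (so the outer product becomes $\prod_{j=1}^{m-1}a_j$ and each partial product $\prod_{j=i+1}^{m-2}a_j$ becomes $\prod_{j=i+1}^{m-1}a_j$), while the extra summand $b_{m-1}$ is precisely the $i=m-1$ term of the sum (the empty product convention $\prod_{j=m}^{m-1}a_j=1$ is used here). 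This assembles exactly to the claimed expression for $p_m$.

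Before doing any of this, one must check that the recursion defining $\overline{F}_j$ makes sense, since the theorem's hypothesis is only used here. The arguments of $\overline{F}_j$ that appear throughout the derivation are of the form $1-q\pm k\varepsilon$ with $|k|\le j\le n-1$, and the constraint $(n-1)\varepsilon<\min\{1-q,q\}$ keeps each such argument strictly inside $(0,1)$. This guarantees in particular that $\overline{F}_{j-1}(x\pm\varepsilon)\notin\{0,1\}$, so the convex combination defining $\overline{F}_j$ is nondegenerate, and all probabilities appearing are genuine probabilities rather than boundary values; otherwise $a_j$ could vanish or the recursion collapse.

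I expect no deep obstacle: the main delicacy is the bookkeeping of index shifts in the products and of the empty-product convention when $i=m-1$. The only substantive point, already addressed above, is the justification via the hypothesis that the whole tower $\overline{F}_1,\overline{F}_2,\ldots,\overline{F}_{n-1}$ is well-defined, so that the telescoping identification of $a_{m-1}\cdot\prod_{j=i+1}^{m-2}a_j$ with $\prod_{j=i+1}^{m-1}a_j$ is valid for every $1\le i\le m-2$.
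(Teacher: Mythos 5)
Your proposal is correct and is essentially the paper's own argument: the paper likewise rewrites (\ref{pj_general}) as $p_{k+1}=p_k\left[\overline{F}_k(1-q-\varepsilon)-\overline{F}_k(1-q+\varepsilon)\right]+\overline{F}_k(1-q+\varepsilon)$ and proves the closed form by induction, with the same index bookkeeping you describe. Your abstraction into $a_j,b_j$ and the remark on why the hypothesis $(n-1)\varepsilon<\min\{1-q,q\}$ keeps the recursion nondegenerate are just a cleaner packaging of the same proof.
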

\begin{proof}
	The result holds for $m=1$ by the definition of $p_{1}$. We proceed by induction, assuming that for an integer
	$k\geq1$,

	$$p_{k}  =\overline{F}_{1}\left(1-q\right)\prod_{j=1}^{k-1}\left[\overline{F}_{j}\left(1-q-\varepsilon\right)-\overline{F}_{j}\left(1-q+\varepsilon\right)\right]+\sum_{i=1}^{k-1}\overline{F}_{i}\left(1-q+\varepsilon\right)\prod_{j=i+1}^{k-1}\left[\overline{F}_{j}\left(1-q-\varepsilon\right)-\overline{F}_{j}\left(1-q+\varepsilon\right)\right].$$
	
	By equation (\ref{pj_general})
	$$p_{k+1}  =\overline{F}_{k}\left(1-q-\varepsilon\right)p_{k}+\overline{F}_{k}\left(1-q+\varepsilon\right)\left(1-p_{k}\right)\\
	=p_{k}\left[\overline{F}_{k}\left(1-q-\varepsilon\right)-\overline{F}_{k}\left(1-q+\varepsilon\right)\right]+\overline{F}_{k}\left(1-q+\varepsilon\right),$$
	
	hence we obtain from the induction hypothesis 
	
	\begin{align*}
		p_{k+1} & =\left[\overline{F}_{k}\left(1-q-\varepsilon\right)-\overline{F}_{k}\left(1-q+\varepsilon\right)\right]\overline{F}_{1}\left(1-q\right)\prod_{j=1}^{k-1}\left[\overline{F}_{j}\left(1-q-\varepsilon\right)-\overline{F}_{j}\left(1-q+\varepsilon\right)\right]\\
		& +\left[\overline{F}_{k}\left(1-q-\varepsilon\right)-\overline{F}_{k}\left(1-q+\varepsilon\right)\right]\sum_{i=1}^{k-1}\overline{F}_{i}\left(1-q+\varepsilon\right)\prod_{j=i+1}^{k-1}\left[\overline{F}_{j}\left(1-q-\varepsilon\right)-\overline{F}_{j}\left(1-q+\varepsilon\right)\right]\\
		& +\overline{F}_{k}\left(1-q+\varepsilon\right)\\
		& =\overline{F}_{1}\left(1-q\right)\prod_{j=1}^{k}\left[\overline{F}_{j}\left(1-q-\varepsilon\right)-\overline{F}_{j}\left(1-q+\varepsilon\right)\right]\\
		& +\sum_{i=1}^{k}\overline{F}_{i}\left(1-q+\varepsilon\right)\prod_{j=i+1}^{k}\left[\overline{F}_{j}\left(1-q-\varepsilon\right)-\overline{F}_{j}\left(1-q+\varepsilon\right)\right].
	\end{align*}
	
	The result now follows.
\end{proof}
\bigskip

\noindent From this point on, we drop the notation $F_1$ and write $F$ instead.
\bigskip

We let $B_{ n}$ denote the number of sessions, not necessarily consecutive, from a total of $n$ that
the student understood. We are interested in the probability function of $B_n$, $\left\{\p\ci B_{ n}=k \cd,0\leq k\leq n\right\}$, for which we consider the following particular cases.

\begin{itemize}
	\item \textit{Case 1: $n=3$ and $k=0$}. 
	
	This is the case when the student understands 0 sessions out of 3. The probability reads:
	$$
	\p\ci B_{3}=0\cd =F\left(1-q\right)F\left(1-q+\varepsilon\right)F\left(1-q+2\varepsilon\right)
	$$
	
	\item \textit{Case 2: $n=3$ and $k=1$}.
	
	If student understand only one session (i.e. $k=1$), the probability is the sum of the following cases.
	
	\begin{enumerate}
		\item[2.1] The student understands only the first session $$\overline{F}\left(1-q\right)F\left(1-q-\varepsilon\right)F\left(1-q\right).$$
		\item[2.2] Student $i$ understands only the second session $$F\left(1-q\right)\overline{F}\left(1-q+\varepsilon\right)F\left(1-q\right).$$
		\item[2.3] And Student $i$ understands only the third session $$F\left(1-q\right)F\left(1-q+\varepsilon\right)\overline{F}\left(1-q+2\varepsilon\right).$$
	\end{enumerate}
	
	Note that the probabilities in cases 2.1 and 2.2 equal $\p\ci B_{2}=0 \cd F\left(1-q\right)$
	and the probability of case 2.3 corresponds to $\p\ci B_{2}=1\cd \overline{F}\left(1-q+2\varepsilon\right)$. Hence
	
	$$\p\ci B_{3}=1\cd = \p\ci B_{2}=0 \cd F\left(1-q\right)+\p\ci B_{2}=0\cd F\left(1-q\right).$$
	
	\item    \textit{Case 3: $n=3$ and $k=2$}.
	If student understand two sessions (i.e. $k=2$), the probability is the sum of the following cases:
	\begin{enumerate}
		\item[3.1] Student $i$ understands the first and second sessions but not the third
		$$\overline{F}\left(1-q\right)\overline{F}\left(1-q-\varepsilon\right)F\left(1-q-2\varepsilon\right).$$
		\item[3.2] Student $i$ understands the second and third sessions but not the first
		$$F\left(1-q\right)\overline{F}\left(1-q+\varepsilon\right)\overline{F}\left(1-q\right).$$
		\item[3.3] Student $i$ understands the first and third sessions but not the second
		$$\overline{F}\left(1-q\right)F\left(1-q-\varepsilon\right)\overline{F}\left(1-q\right).$$
	\end{enumerate}
	Note that cases 2 and 3 correspond to $\p\ci B_{2}=1\cd \overline{F}\left(1-q\right)$
	and the first case equals $\p\ci B_{2}=2\cd F\left(1-q-2\varepsilon\right)$. Hence
	$$\p\ci B_3=2\cd=\p\ci B_{2}=1\cd\overline{F}\left(1-q\right)+\p\ci B_{2}=2\cd F\left(1-q-2\varepsilon\right).$$
	
	\item\textit{ Case 4: $n=3$ and $k=3$}.
	
	This is the case when student $i$ understands all sessions. This probability is given by
	$$
	\p\ci B_{3}=3\cd=\overline{F}\left(1-q\right)\overline{F}\left(1-q-\varepsilon\right)\overline{F}\left(1-q-2\varepsilon\right).
	$$
	
\end{itemize}

\noindent The recursive behaviour observed in the probability function of $B_3$ is generalized in the following theorem.

\begin{theorem}
	\label{theo:A}
	
	Let $n\geq 3$ be an integer. The probability function of the random variable $B_n$ satisfies the following relations.
	
	\begin{enumerate}
		\item $\p\ci B_{ n}=0\cd=\p\ci B_{n-1}=0\cd F\left(1-q+\left(n-1\right)\varepsilon\right),$\\
		\item $\p\ci B_{ n}=n\cd=\p\ci B_{n-1}=n-1\cd \overline{F}\left(1-q-\left(n-1\right)\varepsilon\right),$\\
		\item $\p\ci B_{ n}=k\cd=\p\ci B_{n-1}=k\cd F\left(1-q+\left(n-1-2k\right)\varepsilon\right)
		+\p\ci B_{n-1}=k-1\cd \overline{F}\left(1-q+\left(n-1-2\left(k-1\right)\right)\varepsilon\right).$
	\end{enumerate}
\end{theorem}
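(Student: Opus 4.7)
The three identities will all follow, via the law of total probability, from one structural observation about the sample paths of the process.

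\emph{Key observation.} For any realization $(y_1,\ldots,y_{n-1})\in\{0,1\}^{n-1}$ of the first $n-1$ sessions containing exactly $a$ ones,
\[
\mathbb{P}\bigl[Y(n)=1 \,\big|\, Y(1)=y_1,\ldots,Y(n-1)=y_{n-1}\bigr] = \overline{F}\bigl(1-q+(n-1-2a)\varepsilon\bigr),
\]
so that the conditional probability depends only on the count $a$ and not on the ordering. I would establish this by induction on $n$ directly from the model's prescription: at each step, an understood session shifts the argument of $\overline{F}$ by $-\varepsilon$ and a non-understood session shifts it by $+\varepsilon$, so after $a$ successes and $n-1-a$ failures the net shift is $(-a+(n-1-a))\varepsilon=(n-1-2a)\varepsilon$. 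This is precisely the pattern exhibited in Cases~1--4 above for $n=3$, and the assumption $(n-1)\varepsilon<\min\{1-q,q\}$ keeps the shifted argument inside a range where $\overline{F}$ retains probabilistic meaning.

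Given the observation, the recursions follow by partitioning $\{B_n=k\}$ according to $Y(n)$ and summing over histories. For part (1), $\{B_n=0\}=\{B_{n-1}=0\}\cap\{Y(n)=0\}$, and every history contributing to $\{B_{n-1}=0\}$ has $a=0$, so the conditional probability of $\{Y(n)=0\}$ equals $F(1-q+(n-1)\varepsilon)$. Part (2) is symmetric, with $a=n-1$. For part (3), decompose
\[
\{B_n=k\} \;=\; \{B_{n-1}=k,\;Y(n)=0\} \;\sqcup\; \{B_{n-1}=k-1,\;Y(n)=1\};
\]
histories in the first block all have $a=k$, yielding conditional probability $F(1-q+(n-1-2k)\varepsilon)$ for $\{Y(n)=0\}$, while histories in the second block all have $a=k-1$, yielding conditional probability $\overline{F}(1-q+(n-1-2(k-1))\varepsilon)$ for $\{Y(n)=1\}$. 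Summing over these histories gives exactly the stated identity.

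The main obstacle is formulating and justifying the key observation cleanly. The paper defines $F_j$ through the marginal recursion $\overline{F}_j(x)=\overline{F}_{j-1}(x-\varepsilon)p_{j-1}+\overline{F}_{j-1}(x+\varepsilon)(1-p_{j-1})$, which averages over the history; what is actually needed here is the pathwise statement that the shift of the argument is determined solely by the counts of successes and failures, irrespective of their order. This ``exchangeability of the shift'' must be read from the sample-path description of the model rather than from the marginal $F_j$; the enumerations in Cases~1--4 provide the template, and a straightforward induction on the session index makes it rigorous.
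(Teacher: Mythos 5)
Your proposal is correct and follows essentially the same route as the paper: the authors likewise decompose $\{B_n=k\}$ over the outcome of session $n$, sum over histories with a fixed number of understood sessions, and use exactly your ``key observation'' that after $k$ understood and $n-1-k$ non-understood sessions the argument of $\overline{F}$ is shifted by $(n-1-2k)\varepsilon$ regardless of order (their parts 1 and 2 are handled by the same counting, with part 2 phrased via the complementary count $A_n$). Your explicit remark that this pathwise statement must be read from the sample-path description rather than from the marginal recursion for $F_j$ is a fair clarification of what the paper does implicitly, but it does not change the argument.
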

\begin{proof}
	\begin{enumerate}
		\item If the student has not understand the first $n-1$ sessions from a total of $n$, it follows from the construction of the model that the student's parameter for understanding the $nth$-session becomes $1-q+(n-2)\varepsilon$. Hence
		\begin{align*}
			\p\ci B_{ n}=0\cd&=\p\ci B_{n-1}=0,Y (n)=0\cd=
			\p\ci Y (n)=0 \mid B_{n-1}=0\cd \p\ci B_{n-1}=0\cd \\
			&=F(1-q+(n-1)\varepsilon)\p\ci B_{n-1}=0\cd.
		\end{align*}
		\item Let $A_n$ denote the number of sessions that the student has not understood from a total of $n$. Then the event $\{B_n=n\}$ is the same as $\{A_n=0\}$ and hence
		$\p\ci B_{ n}=n\cd=\p\ci A_{ n}=0\cd$. Now the result in 1 yields
		\begin{align*}
			\p\ci B_{ n}=n\cd &=\p\ci A_{n-1}=0\cd\ci 1-F\pai 1-q+\pai n-1\pad\varepsilon\pad\cd =\p\ci B_{n-1}=n-1\cd \overline{F}(1-q+(n-1)\varepsilon).
		\end{align*}
		\item Let $U_k(n):=\left\{(x_1,\dots,x_n)\in \{0,1\}^n:x_1+\dots+x_n=k\right\}$,     then
		\small
		\begin{align}
			\p\ci B_{ n}=k\cd        &=\p\ci B_{n-1}=k,Y (n)=0\cd+P\ci B_{n-1}=k-1,Y (n)=1\cd\nonumber \\
			&=\sum\limits_{(x_1,\dots,x_{n-1})\in U_k(n-1)}\p\ci Y (1)=x_1,\dots,Y (n-1)=x_{n-1},Y (n)=0\cd\nonumber \\
			&+\sum\limits_{(x_1,\dots,x_{n-1})\in U_{k-1}(n-1)}\p\ci Y (1)=x_1,\dots,Y (n-1)=x_{n-1},Y (n)=1\cd\nonumber  \\
			&=\sum\limits_{(x_1,\dots,x_{n-1})\in U_k(n-1)}\p\ci Y (n)=0 \mid Y (1)=x_1,\dots,Y (n-1)=x_{n-1}\cd\p\ci  Y (1)=x_1,\dots,Y (n-1)=x_{n-1}\cd\nonumber \\
			&+\sum\limits_{(x_1,\dots,x_{n-1})\in U_{k-1}(n-1)}\p\ci Y (n)=1 \mid Y (1)=x_1,\dots,Y (n-1)=x_{n-1}\cd \p\ci  Y (1)=x_1,\dots,Y (n-1)=x_{n-1}\cd. \label{probafea0}
		\end{align}
		
		\normalsize
		\noindent    Note that, given the configuration $Y (1)=x_1,\dots,Y (n-1)=x_{n-1}$ in which the student understood exactly $k$ sessions, we have added $k$ times $\varepsilon$  to the quality parameter $q$. Moreover, the maximum number of times we may add or subtract $\varepsilon$ in a total of $n$ sessions equals $n-1$, since we do not add or subtract anything in session 1. Hence if the student understood $k$ of $n$ sessions, they did not understand $n-k$ and we have subtracted $n-1-k$ times $\varepsilon$. This means that understanding session $n$ depends on the parameter
		
		$$q+k\varepsilon-(n-1-k)\varepsilon=q-(n-1-2k)\varepsilon.$$
		
		\noindent    It follows that the student does not understand session $n$ with probability $F\pai 1-q-(n-1-2k)\varepsilon\pad. $ Since this holds for any given configuration $Y (1)=x_1,\dots,Y (n-1)=x_{n-1}$, in which the student has understood exactly $k$ sessions, we obtain
		\begin{align}
			&\sum\limits_{(x_1,\dots,x_{n-1})\in U_k(n-1)}\p\ci Y (n)=0 \mid Y (1)=x_1,\dots,Y (n-1)=x_{n-1}\cd\p\ci  Y (1)=x_1,\dots,Y (n-1)=x_{n-1}\cd\nonumber \\
			&=F\pai1-q-(n-1-2k)\varepsilon\pad\sum\limits_{(x_1,\dots,x_{n-1})\in U_k(n-1)}\p\ci  Y (1)=x_1,\dots,Y (n-1)=x_{n-1}\cd\nonumber \\
			&=F\pai1-q-(n-1-2k)\varepsilon\pad\p\ci B_{n-1}=k\cd.\label{probafea1}
		\end{align}
		\noindent   Analogously
		\begin{align}
			&\sum\limits_{(x_1,\dots,x_{n-1})\in U_{k-1}(n-1)}\p\ci Y (n)=1 \mid Y (1)=x_1,\dots,Y (n-1)=x_{n-1}\cd\p\ci  Y (1)=x_1,\dots,Y (n-1)=x_{n-1}\cd\nonumber \\
			&=\overline{F}\pai1-q-(n-1-2(k-1))\varepsilon\pad\p\ci B_{n-1}=k-1\cd.\label{probafea1.2}
		\end{align}
		\noindent The result follows by substituting equations (\ref{probafea1}) and (\ref{probafea1.2}) in equation (\ref{probafea0}).
	\end{enumerate}
\end{proof}

\noindent Equations in Theorem \ref{theo:A} can be written as a single matrix equation. Let $\vec{B}_{ n}\in\mathbb{M}_{1,n+1}$ be given by 
\[\vec{B}_{ n}=\left[
\p\ci B_{ n}=0\cd, P\ci B_{ n}=1\cd , \p\ci B_{ n}=2\cd,  \ldots  ,\p\ci B_{ n}=n\cd \right],\]
and denote by $\mathcal{M}_{n}\in\mathbb{M}_{n+1,n}$ the matrix such that

\begin{equation}
	\left(\mathcal{M}_{n}\right)_{a,b}=\begin{cases}
		F\left(1-q-\left(n-1-2a\right)\varepsilon\right) & \text{for }a=b\\
		\overline{F}\left(1-q-\left(n-1-2a\right)\varepsilon\right) & \text{for }a=b-1\\
		0 & \text{otherwise }.
	\end{cases}
	\label{matrixrepresentationBn}
\end{equation}

\noindent Using the notation above we note that $\vec{B}_{ n}=\mathcal{M}_{n}\cdot\vec{B}_{n-1}$, therefore
$$\vec{B}_{ n}=\prod_{k=1}^{n}\mathcal{M}_{k}\vec{B}_{0,i},$$
where
$\vec{B}_{0}=\left[F\left(1-q\right),\overline{F}\left(1-q\right)\right]$. This representation is used for some numerical examples in Section 5.
\bigskip

\noindent The explicit distribution of $B_{ n}$ is not easy to obtain even in simple cases (such as the case when $F$ is a uniform distribution). Nevertheless, in the following result we provide a simple asymptotic expression for this distribution.

\begin{theorem}\label{teobonito}
	Let $\left\{p_n(k),k=0,1,\dots,n\right\}$ denote the probability function of a $Binomial(n,p)$ distribution, with $p:=\overline{F}(1-q) $. Suppose $n,\varepsilon$ are such that $n^2\varepsilon\to0$ as $n\to\infty$ and $F$ is absolutely continuous with density $f$ such that $f$ is continuous at $1-q$, then,
	
	$$\lim\limits_{n\to\infty}\frac{\p\ci B_{ n}=k\cd}{p_n(k)}=1,\quad \forall k\in\{0,1,\dots,n\}.$$
\end{theorem}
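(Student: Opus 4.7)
The plan is to use the recursion of Theorem~\ref{theo:A} as a direct analogue of the Pascal identity
\[
p_m(k) = (1-p)\,p_{m-1}(k) + p\,p_{m-1}(k-1)
\]
satisfied by the Binomial probabilities, and then control the accumulated error by a Gr\"onwall-type iteration. First, I note that the argument proving Theorem~\ref{theo:A} applies without change to any $m\le n$ with the same $\varepsilon$, so that
\[
\p\ci B_m=k\cd = F\pai 1-q+(m-1-2k)\varepsilon\pad\,\p\ci B_{m-1}=k\cd + \overline F\pai 1-q+(m-1-2(k-1))\varepsilon\pad\,\p\ci B_{m-1}=k-1\cd
\]
holds for all $1\le m\le n$ and $0\le k\le m$, with the convention that out-of-range terms vanish. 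Since $n^2\varepsilon\to 0$ gives $n\varepsilon\to 0$, all shifted arguments lie in a shrinking neighbourhood of $1-q$, and continuity of $f$ at $1-q$ yields the uniform first-order estimates
\[
|F(1-q+c\varepsilon)-(1-p)|\le Cn\varepsilon,\qquad |\overline F(1-q+c\varepsilon)-p|\le Cn\varepsilon
\]
for all $|c|\le n$, where $C$ depends only on $f(1-q)$ and on $\min\{p,1-p\}$, which we assume positive (otherwise the statement is either vacuous or trivial).

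Next, I would set $r_m(k):=\p\ci B_m=k\cd/p_m(k)$ and $M_m:=\max_{0\le k\le m}|r_m(k)-1|$. Dividing the displayed recursion by $p_m(k)$ and using the Pascal identity rewrites $r_m(k)$ as $w_1\rho_1 r_{m-1}(k)+w_2\rho_2 r_{m-1}(k-1)$, where $w_1+w_2=1$ are the usual Binomial weights and $\rho_1,\rho_2=1+O(n\varepsilon)$ are the ratios of the shifted CDF values to $1-p$ and $p$ respectively. A short manipulation then produces the one-step multiplicative estimate
\[
1+M_m\le (1+M_{m-1})(1+Cn\varepsilon),
\]
and iterating from the trivial base case $M_0=0$ yields $1+M_n\le (1+Cn\varepsilon)^n\le \exp\pai Cn^2\varepsilon\pad \to 1$, so $M_n\to 0$ and the claim follows, uniformly in $k$.

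The main obstacle I foresee is that one needs ratio convergence and not mere absolute closeness: since $p_n(k)$ may decay exponentially in $n$ for a fixed $k$, a bound of order $n^2\varepsilon$ on $|\p\ci B_n=k\cd-p_n(k)|$ would not by itself force the ratio to tend to $1$. The key is therefore to organise the one-step estimate multiplicatively through the convex-combination structure, so that the accumulated error takes the form $(1+Cn\varepsilon)^n$; this is precisely the product that matches the hypothesis $n^2\varepsilon\to 0$ and turns the pointwise Taylor approximation of $F$ into a uniform ratio statement.
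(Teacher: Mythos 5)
Your proposal is correct, and it takes a genuinely different route from the paper's proof. The paper argues directly on the configuration decomposition used in Theorem \ref{theo:A}: each path probability is a product of $F$ and $\overline{F}$ evaluated within $n\varepsilon$ of $1-q$, these products are squeezed between two extreme products, L'H\^opital's rule (after writing $\varepsilon=cn^{-2-\eta}$) shows the ratio to $\overline{F}^{k}(1-q)F^{n-1-k}(1-q)$ tends to $1$, and then the count $\binom{n-1}{k}$ of configurations is inserted; the cases $k=0$ and $k=n$ are done via explicit products. You instead play the recursion of Theorem \ref{theo:A} against the Pascal identity for the binomial pmf and propagate the error multiplicatively through the convex-combination structure, getting $1+M_n\le(1+Cn\varepsilon)^{n}\le e^{Cn^{2}\varepsilon}\to1$; the sketched one-step bound does go through, since $|r_m(k)-1|\le w_1|\rho_1 r_{m-1}(k)-1|+w_2|\rho_2 r_{m-1}(k-1)-1|$ and $|\rho r-1|\le\rho|r-1|+|\rho-1|$ yield $1+M_m\le(1+M_{m-1})(1+C'n\varepsilon)$. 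Your route buys three things: the conclusion is uniform in $k$ (the paper's argument keeps $k$ fixed, as it relies on the exponent of $\overline{F}$ not growing with $n$); you need only $f$ bounded near $1-q$, which follows from continuity at that point, rather than the L'H\^opital computation and the parametrisation $\varepsilon=cn^{-2-\eta}$, which is not actually implied by $n^{2}\varepsilon\to0$ (e.g.\ $\varepsilon=1/(n^{2}\log n)$); and you make explicit the hypothesis $0<\overline{F}(1-q)<1$ that the paper uses implicitly when dividing by $F(1-q)$. What the paper's approach buys in exchange is an explicit handle on the individual path probabilities, with no Gr\"onwall-type bookkeeping. Your diagnosis of the key obstacle --- that an additive error bound of order $n^{2}\varepsilon$ cannot control the ratio when $p_n(k)$ decays exponentially --- is exactly the right point, and it is the reason the multiplicative organisation is needed.
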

\begin{proof}
	First we prove the case when $k\notin\{0,n\}$. Following the arguments in the proof of Theorem \ref{theo:A}, we have 
	\begin{align}
		\p\ci B_{ n}=k\cd\nonumber
		&=F\pai1-q-(n-1-2k)\varepsilon\pad\sum\limits_{(x_1,\dots,x_{n-1})\in U_k(n-1)}\p\ci  Y (1)=x_1,\dots,Y (n-1)=x_{n-1}\cd\nonumber \\
		&+\overline{F}\pai1-q-(n-1-2(k-1))\varepsilon\pad\sum\limits_{(x_1,\dots,x_{n-1})\in U_{k-1}(n-1)} \p\ci  Y (1)=x_1,\dots,Y (n-1)=x_{n-1}\cd, \label{probafea2}
	\end{align}
	
	where
	$$U_k(n)=\{(x_1,\dots,x_n)\in \{0,1\}^n:x_1+\dots+x_n=k\}.$$
	
	From the hypothesis $n^2\varepsilon\to 0$ it follows that $\varepsilon\to 0$. Hence, by the assumption of continuity of $F$, the following convergences as $n\to\infty$ hold: 
	
	$$F\pai1-q-(n-1-2k)\varepsilon\pad\to F\pai 1-q\pad,\qquad \overline{F}\pai1-q-(n-1-2(k-1))\varepsilon\pad\to \overline{F}(1-q).$$
	
	Note from the matrix representation given in (\ref{matrixrepresentationBn}) that  
	$\p\ci  Y (1)=x_1,\dots,Y (n-1)=x_{n-1}\cd$ can be expressed as
	
	\begin{equation*}
		p_{n-1,k}:=\prod\limits_{j=1}^{n-1}\overline{F}^{v_1(j)}\Big(1-q-u_1(j)\varepsilon+u_0(j)\varepsilon\Big)F^{1-v_1(j)}\Big(1-q-u_1(j)\varepsilon+u_0(j)\varepsilon\Big),
	\end{equation*}
	
	where 
	
	$$u_1(j)=\sum\limits_{a=1,x_a=1}^{j} (-1)^{x_a},\quad u_0(j)=\sum\limits_{a=1,x_a=0}^{j} (-1)^{x_a},$$
	
	and $v_1(j)=1$ if student understood session $j$.
	
	We have the following bounds for $p_{n-1,k}$:
	
	\begin{equation}\label{cocientefeo1}
		\overline{F}^k\Big(1-q+u_0(j)\varepsilon\Big)F^{n-1-k}\Big(1-q-u_1(j)\varepsilon\Big)\leq p_{n-1,k}\leq\overline{F}^k\Big(1-q-u_1(j)\varepsilon\Big)F^{n-1-k}\Big(1-q+u_0(j)\varepsilon\Big)
	\end{equation}
	
	Since the terms with the tail $\overline{F}$ converge to $\overline{F}(1-q)$ and their exponents do not depend on $n$, we only need to prove that 
	$$\frac{F^{n-1-k}\Big(1-q-u_1(j)\varepsilon\Big)}{F^{n-1-k}(1-q)}\to 1,\quad n\to\infty,$$ 
	
	or equivalently
	$$
	(n-1-k)\log\pai\frac{F\Big(1-q-u_1(j)\varepsilon\Big)}{F(1-q)}\pad \to 0,\quad n\to\infty.
	$$
	
	Using the hypothesis $n^2\varepsilon\to 0$ we may write $\varepsilon=c n^{-{2-\eta}}$ for some $c,\eta>0$. Applying L'Hôpital's rule we obtain
	
	$$     \lim\limits_{n\to\infty}n\log\pai\frac{F\Big(1-q-u_1(j)\varepsilon\Big)}{F(1-q)}\pad
	=cu(j)(2+\eta)\lim\limits_{n\to\infty}\frac{n^{-3-\eta}F(1-q)f(1-q-u_1(j)c n^{-2-\eta})}{(-n^{-2})F(1-q-u_1(j)c n^{-2-\eta})}=0$$
	
	From the limit above and (\ref{cocientefeo1}) it follows that \begin{equation}
		\dfrac{p_{n-1,k}}{\overline{F}^k(1-q)F^{n-1-k}(1-q)}\to 1, n\to\infty.\label{limitea1}
	\end{equation} 
	
	Now let us consider the term
	$$\sum\limits_{(x_1,\dots,x_{n-1})\in U_k(n-1)}\p\ci  Y (1)=x_1,\dots,Y (n-1)=x_{n-1}\cd.$$
	
	Since $|U_k(n-1)|=\binom{n-1}{k}$, using the result in equation (\ref{limitea1}), for an arbitrary $\beta>0$ and sufficiently large $n$ we have
	
	$$    1-\beta\leq \sum\limits_{(x_1,\dots,x_{n-1})\in U_k(n-1)}\frac{\p\ci  Y (1)=x_1,\dots,Y (n-1)=x_{n-1}\cd}{\binom{n-1}{k}\overline{F}^k(1-q)F^{n-1-k}(1-q)}\leq 1+\beta.$$
	
	The result follows by letting $n\to\infty$ and $\beta\to 0$. The result for the second term in equation (\ref{probafea2}) is obtained analogously. Now we proceed in a similar way to prove that $\p\ci B_{ n}=0\cd$ is asymptotically equivalent to $p_n(0)$. It might be easily checked that $\p\ci B_n=0\cd=F(1-q)\prod\limits_{j=1}^{n-1}F(1-q+j\varepsilon)$, hence
	
	$$ 1=\pai\frac{F(1-q)}{F(1-q)}\pad^n\leq \frac{F(1-q)\prod\limits_{j=1}^{n-1}F(1-q+j\varepsilon)}{F^n(1-q)}\leq \pai\frac{F(1-q+n\varepsilon)}{F(1-q)}\pad^{n-1}.$$
	
	Using the representation $\varepsilon=c n^{-\eta-2}$ and L'Hôpital's rule again, we obtain
	
	$$    \lim\limits_{n\to\infty}(n-1)\log \frac{F(1-q+n\varepsilon)}{F(1-q)}=-c(\eta+2)\lim\limits_{n\to\infty}(n-1)^2\frac{n^{-\eta-2}F(1-q+c n^{-\eta-1})}{F(1-q)}f\pai1-q+c n^{-\eta-1}\pad=0.$$
	
	The proof for $\p\ci B_{ n}=n\cd$ is analogous.
\end{proof}
\bigskip

\noindent Theorem \ref{teobonito} says that for a sufficiently large number of sessions, the dependence becomes less relevant, hence the number of sessions that each student understands behaves like a binomial distribution in which the occurrence of successes is independent.

\section{The case of the uniform distribution}

In this section we present some important quantities in the particular case when the students initial distribution for understanding is uniform in $[0,1]$. Throughout this section we assume that $\varepsilon$ is such that $[1-q-(n-1)\varepsilon,1-q+(n-1)\varepsilon]\subseteq[0,1]$, which we name as \textit{Hypothesis 1}.

\begin{proposition}\label{Prop2}
	Let $F$ be the uniform distribution over $(0,1)$ and assume Hypothesis 1 holds.
	Then for $i\leq n$ we have
	$$
	\overline{F}_{i}\left(1-q-m\varepsilon\right)=\left(1+2\varepsilon\right)^{i-1}\left\{ q-\frac{1}{2}\right\} +\frac{1}{2}+m\varepsilon
	$$
\end{proposition}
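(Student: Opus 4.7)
The plan is to prove the formula by induction on $i$, carrying along enough information about the sequence $\{p_i\}$ to close the recursion. Hypothesis 1 matters because it guarantees that every argument $1-q \pm k\varepsilon$ stays inside $[0,1]$, so we may legitimately write $\overline{F}_1(y) = 1-y$ throughout and the iterative definition of $\overline{F}_j$ remains within the range where the formula $\overline{F}(y) = 1-y$ holds.

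For the base case $i=1$ we simply evaluate $\overline{F}(1-q-m\varepsilon) = q + m\varepsilon$, which matches the stated formula. For the inductive step from $i-1$ to $i$ I would apply the recursive definition
$$\overline{F}_{i}(1-q-m\varepsilon) \;=\; p_{i-1}\,\overline{F}_{i-1}\bigl(1-q-(m+1)\varepsilon\bigr) \;+\; (1-p_{i-1})\,\overline{F}_{i-1}\bigl(1-q-(m-1)\varepsilon\bigr),$$
substitute the induction hypothesis into each $\overline{F}_{i-1}(\cdot)$ term, and observe that the parts of the two expressions that do not depend on $m$ coincide, so they add up to $(1+2\varepsilon)^{i-2}\{q-\tfrac{1}{2}\}+\tfrac{1}{2}$. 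The $m$-dependent pieces combine to $m\varepsilon + (2p_{i-1}-1)\varepsilon$.

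The one subtlety, and the main step, is that to get the promised factor $(1+2\varepsilon)^{i-1}$ in place of $(1+2\varepsilon)^{i-2}$ I need $2p_{i-1}-1 = 2(1+2\varepsilon)^{i-2}\{q-\tfrac{1}{2}\}$, that is, $p_{i-1} = (1+2\varepsilon)^{i-2}\{q-\tfrac{1}{2}\} + \tfrac{1}{2}$. Fortunately this is not a separate claim: since by construction $p_{i-1} = \overline{F}_{i-1}(1-q)$, this is precisely the $m=0$ instance of the induction hypothesis at level $i-1$. So strengthening the induction to include all admissible $m$ at each level (rather than just the values needed one step later) makes the recursion self-contained, and no auxiliary lemma for $p_{i-1}$ is required.

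The remaining work is purely algebraic: expanding, collecting the coefficient of $\{q-\tfrac{1}{2}\}$, using $1 + 2\varepsilon \cdot (1+2\varepsilon)^{i-2} = (1+2\varepsilon)^{i-1}$ to upgrade the exponent, and reading off $\tfrac{1}{2} + m\varepsilon$ as the remaining additive terms. I expect no obstacle beyond keeping track of signs in the $m\pm 1$ shifts; Hypothesis 1 ensures that even the extreme shifts reached during the induction remain in the domain where $\overline{F}_1(y)=1-y$, so the recursion never leaves the regime in which the closed form is valid.
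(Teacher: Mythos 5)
Your proposal is correct and follows essentially the same route as the paper: induction on $i$ via the recursion $\overline{F}_{i}(1-q-m\varepsilon)=p_{i-1}\overline{F}_{i-1}\left(1-q-(m+1)\varepsilon\right)+(1-p_{i-1})\overline{F}_{i-1}\left(1-q-(m-1)\varepsilon\right)$, with the key observation that $p_{i-1}=\overline{F}_{i-1}(1-q)$ is just the $m=0$ instance of the induction hypothesis (the paper substitutes $\overline{F}_{j}(1-q)$ directly and expands, which is the same step with less tidy bookkeeping). Your organization of the algebra into $m$-independent and $m$-dependent pieces, yielding $m\varepsilon+(2p_{i-1}-1)\varepsilon$, is a cleaner presentation of the identical computation.
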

\begin{proof}
	Recall that $F_1=F$. The result holds for $i=1$, since
	$$
	\overline{F}_{1}\left(1-q-m\varepsilon\right)=q+m\varepsilon
	$$
	
	We proceed by induction, assuming that for an integer $j\geq1$
	
	$$
	\overline{F}_{j}\left(1-q-m\varepsilon\right)=\left(1+2\varepsilon\right)^{j-1}\left\{ q-\frac{1}{2}\right\} +\frac{1}{2}+m\varepsilon
	$$
	
	By the Law of Total Probability and induction hypothesis
	
	\begin{align*}
		\overline{F}_{j+1}\left(1-q-m\varepsilon\right) & =\overline{F}_{j}\left(1-q-\left(m+1\right)\varepsilon\right)\overline{F}_{j}\left(1-q\right)+\overline{F}_{j}\left(1-q-\left(m-1\right)\varepsilon\right)\left(1-\overline{F}_{j}\left(1-q\right)\right)\\
		& =\left[\left(1+2\varepsilon\right)^{j-1}\left\{ q-\frac{1}{2}\right\} +\frac{1}{2}+\left(m+1\right)\varepsilon\right]\left[\left(1+2\varepsilon\right)^{j-1}\left\{ q-\frac{1}{2}\right\} +\frac{1}{2}\right]\\
		& +\left[\left(1+2\varepsilon\right)^{j-1}\left\{ q-\frac{1}{2}\right\} +\frac{1}{2}+\left(m-1\right)\varepsilon\right]\left[\frac{1}{2}-\left(1+2\varepsilon\right)^{j-1}\left\{ q-\frac{1}{2}\right\} \right]\\
		& =\frac{1}{2}\left(1+2\varepsilon\right)^{j-1}\left\{ q-\frac{1}{2}\right\} +\frac{1}{2}\left[\frac{1}{2}+\left(m+1\right)\varepsilon\right]+\left[\left(1+2\varepsilon\right)^{j-1}\left\{ q-\frac{1}{2}\right\} \right]\left[\frac{1}{2}+\left(m+1\right)\varepsilon\right]\\
		& +\frac{1}{2}\left(1+2\varepsilon\right)^{j-1}\left\{ q-\frac{1}{2}\right\} +\frac{1}{2}\left[\frac{1}{2}+\left(m-1\right)\varepsilon\right]-\left[\left(1+2\varepsilon\right)^{j-1}\left\{ q-\frac{1}{2}\right\} \right]\left[\frac{1}{2}+\left(m-1\right)\varepsilon\right]\\
		& =\left(1+2\varepsilon\right)^{j-1}\left\{ q-\frac{1}{2}\right\} +\frac{1}{2}+m\varepsilon+2\varepsilon\left[\left(1+2\varepsilon\right)^{j-1}\left\{ q-\frac{1}{2}\right\} \right]\\
		& =\left(1+2\varepsilon\right)^{j}\left\{ q-\frac{1}{2}\right\} +\frac{1}{2}+m\varepsilon
	\end{align*}
	
	Hence the result follows.
\end{proof}

\begin{theorem}
	Let $F$ be the uniform distribution over $(0,1)$ and assume
	Hypothesis 1 holds. Then for $m\leq n$ we have
	$$
	p_{m}=\frac{1}{2}+\left(1+2\varepsilon\right)^{m-1}\left(q-\frac{1}{2}\right)
	$$
\end{theorem}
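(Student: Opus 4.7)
The plan is to prove this by induction on $m$, using Proposition \ref{Prop2} to evaluate the tail quantities that appear in the recursion (\ref{pj_general}). Since the entire dependence on the prior sessions has already been absorbed into the closed form for $\overline{F}_{j}(1-q-m\varepsilon)$, no delicate estimation should be needed, only bookkeeping.

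For the base case $m=1$, note that $F_1 = F$ is the uniform distribution on $(0,1)$, so $\overline{F}(1-q) = q$. On the other hand $\tfrac{1}{2} + (1+2\varepsilon)^{0}(q-\tfrac{1}{2}) = q$, so the formula holds.

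For the inductive step, I assume $p_{m-1} = \tfrac{1}{2} + (1+2\varepsilon)^{m-2}(q-\tfrac{1}{2})$ and apply (\ref{pj_general}):
$$p_m = \overline{F}_{m-1}(1-q-\varepsilon)\,p_{m-1} + \overline{F}_{m-1}(1-q+\varepsilon)\,(1-p_{m-1}).$$
By Proposition \ref{Prop2} with the values $m=1$ and $m=-1$, writing $A:=(1+2\varepsilon)^{m-2}(q-\tfrac{1}{2}) + \tfrac{1}{2}$, we have $\overline{F}_{m-1}(1-q-\varepsilon) = A+\varepsilon$ and $\overline{F}_{m-1}(1-q+\varepsilon) = A-\varepsilon$. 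The recursion then collapses to
$$p_m = A + \varepsilon(2p_{m-1}-1),$$
since the $p_{m-1}$-terms with coefficient $A$ cancel. Substituting the induction hypothesis for $p_{m-1}$ turns $\varepsilon(2p_{m-1}-1)$ into $2\varepsilon(1+2\varepsilon)^{m-2}(q-\tfrac{1}{2})$, and combining with the $(1+2\varepsilon)^{m-2}(q-\tfrac{1}{2})$ already in $A$ gives exactly $(1+2\varepsilon)^{m-1}(q-\tfrac{1}{2}) + \tfrac{1}{2}$.

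I do not expect any genuine obstacle here: Hypothesis 1 is exactly what guarantees that the arguments of $\overline{F}_{j}$ lie in $[0,1]$ so that Proposition \ref{Prop2} applies, and the rest is algebra. The only point worth double-checking is the sign convention in Proposition \ref{Prop2}, i.e.\ that $m=-1$ is the correct choice for evaluating $\overline{F}_{m-1}(1-q+\varepsilon)$; once that is confirmed, the cancellation of the $A\cdot p_{m-1}$ terms makes the induction immediate.
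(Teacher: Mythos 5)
Your proof is correct, and it takes a genuinely different route from the paper. You induct on $m$ directly from the recursion (\ref{pj_general}): with $A=(1+2\varepsilon)^{m-2}\left(q-\tfrac12\right)+\tfrac12$, Proposition \ref{Prop2} (applied with the parameter values $+1$ and $-1$, which is legitimate — the paper itself evaluates $\overline{F}_j(1-q+\varepsilon)$ this way, and Hypothesis 1 keeps the arguments in $[0,1]$) gives $\overline{F}_{m-1}(1-q\mp\varepsilon)=A\pm\varepsilon$, so the recursion collapses to the affine relation $p_m=A+\varepsilon(2p_{m-1}-1)$ and the induction closes in one line; the base case $p_1=\overline F(1-q)=q$ is immediate. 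The paper instead specializes its general closed form from Theorem \ref{Teo1}: it uses Proposition \ref{Prop2} to get $\overline{F}_j(1-q-\varepsilon)-\overline{F}_j(1-q+\varepsilon)=2\varepsilon$ and $\overline{F}_i(1-q+\varepsilon)=(1+2\varepsilon)^{i-1}\left(q-\tfrac12\right)+\tfrac12-\varepsilon$, then sums two geometric series in $(2\varepsilon)^{-1}$ and $(1+2\varepsilon)/(2\varepsilon)$ and simplifies. Both arguments lean on the same key lemma (Proposition \ref{Prop2}); yours buys brevity and avoids the geometric-series bookkeeping by bypassing Theorem \ref{Teo1} altogether (in effect re-running the induction behind Theorem \ref{Teo1} in the special case where it telescopes trivially), while the paper's derivation illustrates how the general formula of Theorem \ref{Teo1} can be evaluated in a concrete case, which is part of the point of that theorem. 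No gap to report; the only hypotheses you use are exactly those available (uniform $F$ and Hypothesis 1).
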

\begin{proof}
	For $1\leq j\leq m-1$, it follows from Proposition \ref{Prop2}
	that
	$$
	\overline{F}_{j}\left(1-q-\varepsilon\right)-\overline{F}_{j}\left(1-q+\varepsilon\right)=2\varepsilon
	$$
	
	Since $\overline{F}\left(1-q\right)=q$, by Theorem \ref{Teo1} we have
	
	\small
	\begin{align}
		p_{m} & =\overline{F}_{1}\left(1-q\right)\prod_{j=1}^{m-1}\left[\overline{F}_{j}\left(1-q-\varepsilon\right)-\overline{F}_{j}\left(1-q+\varepsilon\right)\right]\nonumber +\sum_{i=1}^{m-1}\overline{F}_{i}\left(1-q+\varepsilon\right)\prod_{j=i+1}^{m-1}\left[\overline{F}_{j}\left(1-q-\varepsilon\right)-\overline{F}_{j}\left(1-q+\varepsilon\right)\right]\nonumber \\
		& =q\left[2\varepsilon\right]^{m-1}+\left\{ \frac{1}{2}-\varepsilon\right\} \sum_{i=1}^{m-1}\left(2\varepsilon\right)^{m-1}\left(2\varepsilon\right)^{-i}+\left\{ q-\frac{1}{2}\right\} \sum_{i=1}^{m-1}\left(1+2\varepsilon\right)^{i-1}\left[2\varepsilon\right]^{m-1-i}\nonumber \\
		& =\left[2\varepsilon\right]^{m-1}\left[q+\left\{ \frac{1}{2}-\varepsilon\right\} \sum_{i=1}^{m-1}\left(2\varepsilon\right)^{-i}+\frac{q-\frac{1}{2}}{1+2\varepsilon}\sum_{i=1}^{m-1}\left(\frac{1+2\varepsilon}{2\varepsilon}\right)^{i}\right].\label{eq:labuena}
	\end{align}
	\normalsize
	
	Using the identities
	$$\sum_{i=1}^{n-1}\left(\frac{1+2\varepsilon}{2\varepsilon}\right)^{i}=\left[1+2\varepsilon\right]\left[\left(\frac{1+2\varepsilon}{2\varepsilon}\right)^{n-1}-1\right]\text{ and }
	\sum_{i=1}^{n-1}\left(2\varepsilon\right)^{-i}=\left[\frac{1}{1-2\varepsilon}\right]\left[\left(\frac{1}{2\varepsilon}\right)^{n-1}-1\right],$$
	
	the right hand of equation (\ref{eq:labuena}) becomes
	
	\begin{align*}
		& \left[2\varepsilon\right]^{m-1}\left[q+\left\{ \frac{1}{2}-\varepsilon\right\} \left[\frac{1}{1-2\varepsilon}\right]\left[\left(\frac{1}{2\varepsilon}\right)^{m-1}-1\right]+\frac{q-\frac{1}{2}}{1+2\varepsilon}\left[1+2\varepsilon\right]\left[\left(\frac{1+2\varepsilon}{2\varepsilon}\right)^{m-1}-1\right]\right]\\
		& =\left[2\varepsilon\right]^{m-1}\left[\frac{1}{2}\left(\frac{1}{2\varepsilon}\right)^{m-1}+q\left(\frac{1+2\varepsilon}{2\varepsilon}\right)^{m-1}-\frac{1}{2}\left(\frac{1+2\varepsilon}{2\varepsilon}\right)^{m-1}\right]\\
		& =\left[\frac{1}{2}+q\left(1+2\varepsilon\right)^{m-1}-\frac{1}{2}\left(1+2\varepsilon\right)^{m-1}\right].
	\end{align*}
	
	Hence we obtain
	
	$$p_{m}=\frac{1}{2}+\left(1+2\varepsilon\right)^{m-1}\left(q-\frac{1}{2}\right).$$
	
	And the result follows.
\end{proof}

\begin{center} 
	\begin{table}
		\begin{tabular}{c c}
			\multicolumn{1}{r}{\includegraphics[scale = 0.4]{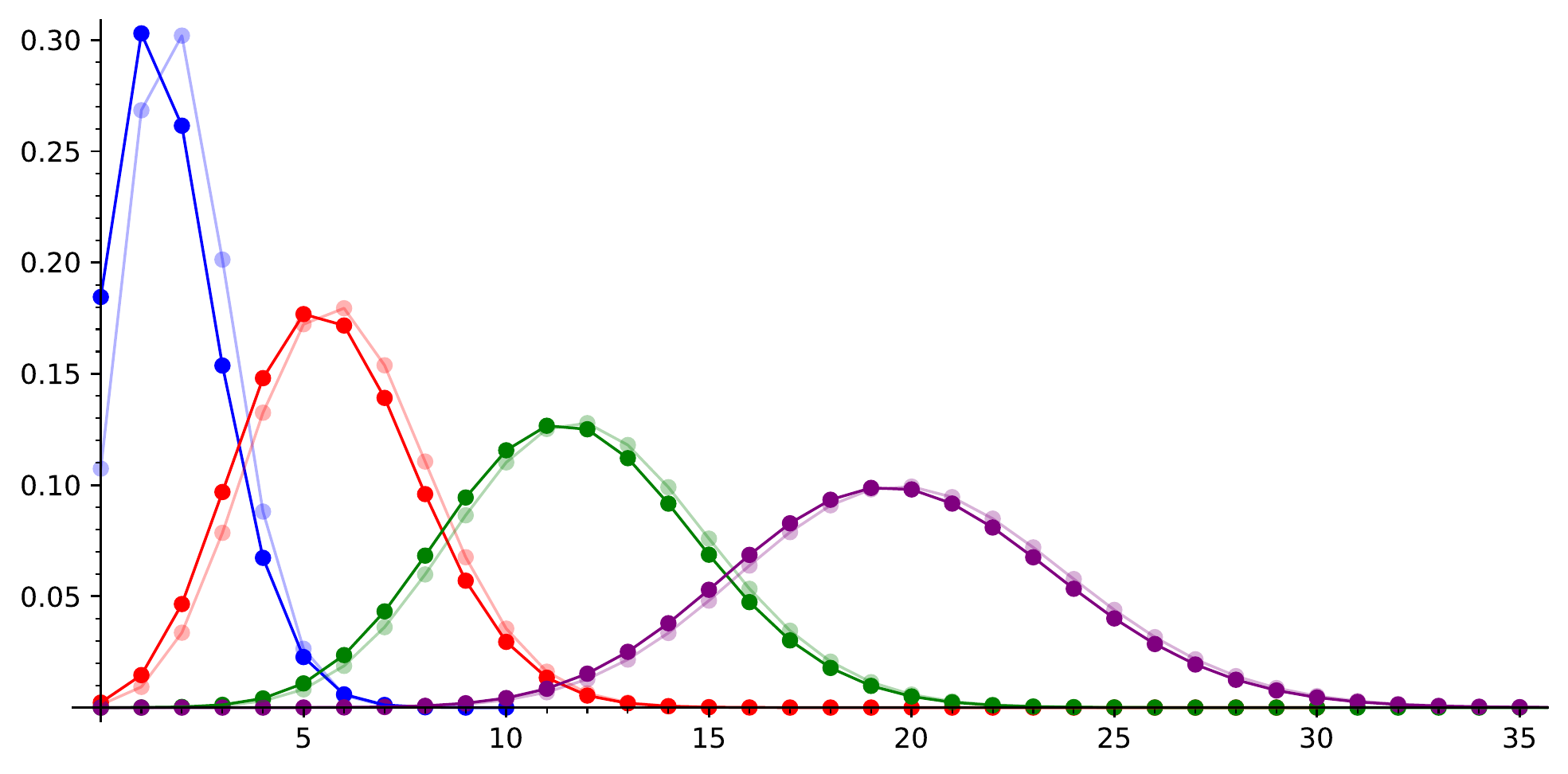}} & \includegraphics[scale = 0.4]{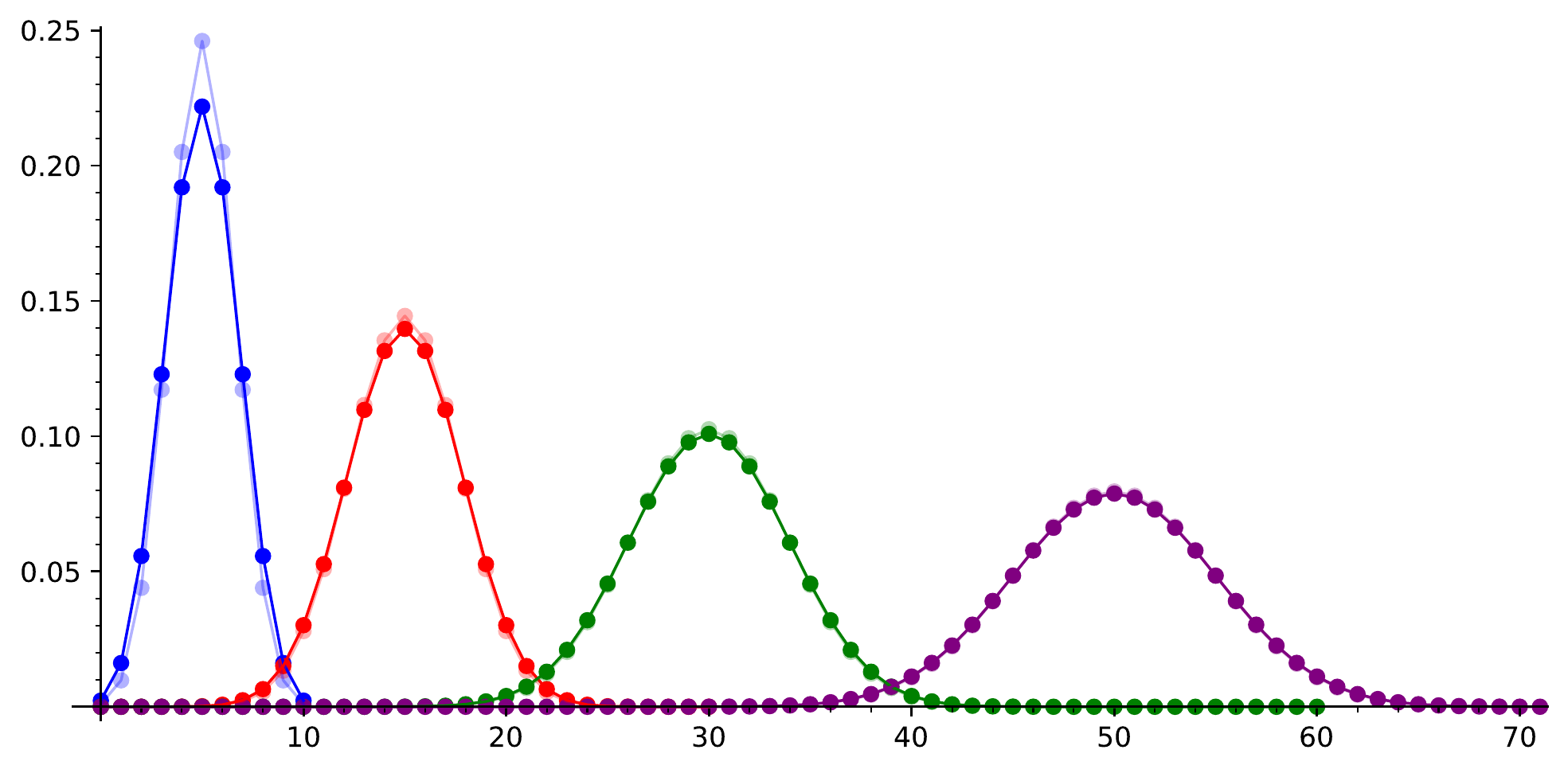} \\
			(a)                     & (b) \\
			\multicolumn{2}{c}{\includegraphics[scale = 0.4]{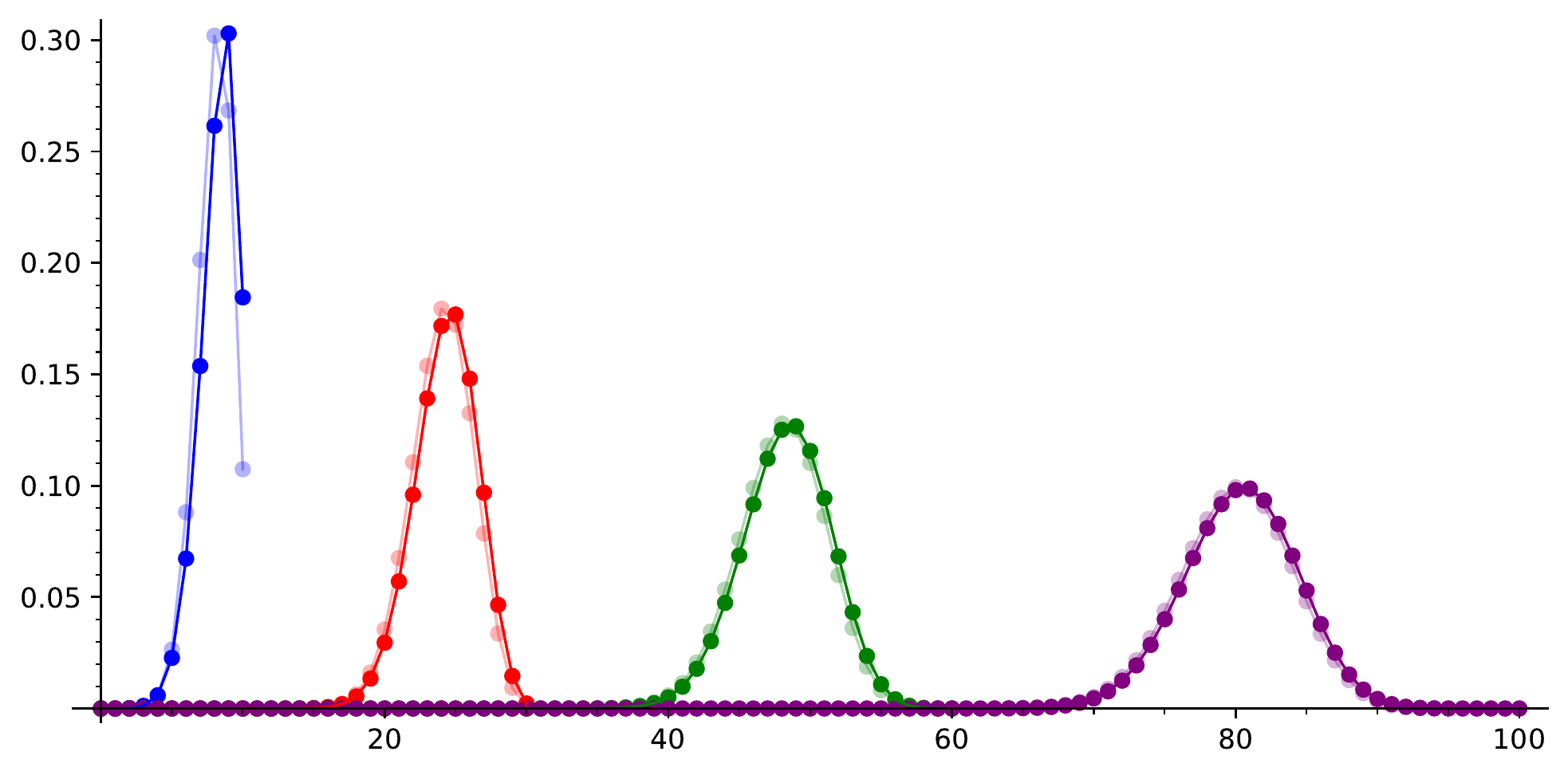}}     \\
			\multicolumn{2}{c}{(c)}    
		\end{tabular}
		\caption{ In this figure we plot some simulations of $B_n$ (dark line) and its approximating binomial distribution (light line).In all the plots we consider $\varepsilon = 1/n^2$ and  $n=10$ in blue; $n=30$ in red; $n=60$ in green and $n=100$ in purple.  Plot (a) is made for $q=0.2$;  Plot (b) is made for $q=0.5$ and Plot (c) for $q=0.8$. } 
		\label{table:1} 
	\end{table}
\end{center}

As we previously mentioned, an analytic expression for the exact distribution of $B_n$ is not easy to obtain even in this simple case when $F$ is the uniform distribution. However, in Theorem \ref{teobonito} we have seen that $B_n$ behaves asymptotically like a binomial random variable. We present some numerical examples of the performance of this approximation, using different values of $n$, $q$ and $\varepsilon$. These examples show that the two distributions are very close even for values of $n$ which are not so large.

In Figure 1 we fix the value of $q$, we simulate the exact distribution and compare it to the approximating binomial distribution. This is made for $n=10, 30, 60, 100$ (plots in blue, red, green and purple respectively). The light line corresponds to the approximating binomial distribution while the dark line represents the simulated exact distribution of $B_n$.

As it can be seen in the distinct plots, the convergence to the binomial distribution is quite fast and it grows faster when $q=.5$. Moreover, it is seen numerically that for any $q$, when $n\geq 40$, $\frac{\p\ci B_{ n}=k\cd}{p_n(k)} \geq 0.95$. 

Our numerical examples show that the speed of convergence depends of the value of $q$, as it can be seen in Figure 2, where we use different values of $q$ with fixed $n$ and compare the exact distribution of $B_n$ to the approximating binomial distribution.

Another point worth mentioning is that the distributions behave symmetrically with respect to $q=0.5$. In this case, the convergence to the binomial distribution is faster than in the other cases. In fact, the cases when $q$ is nearly 0 or 1, present a slower convergence to the binomial distribution. This may imply that the dependence is stronger when the quality of the class is low or high.

\begin{center} 
	\begin{table} 
		\begin{tabular}{c c} 
			\includegraphics[scale = 0.4]{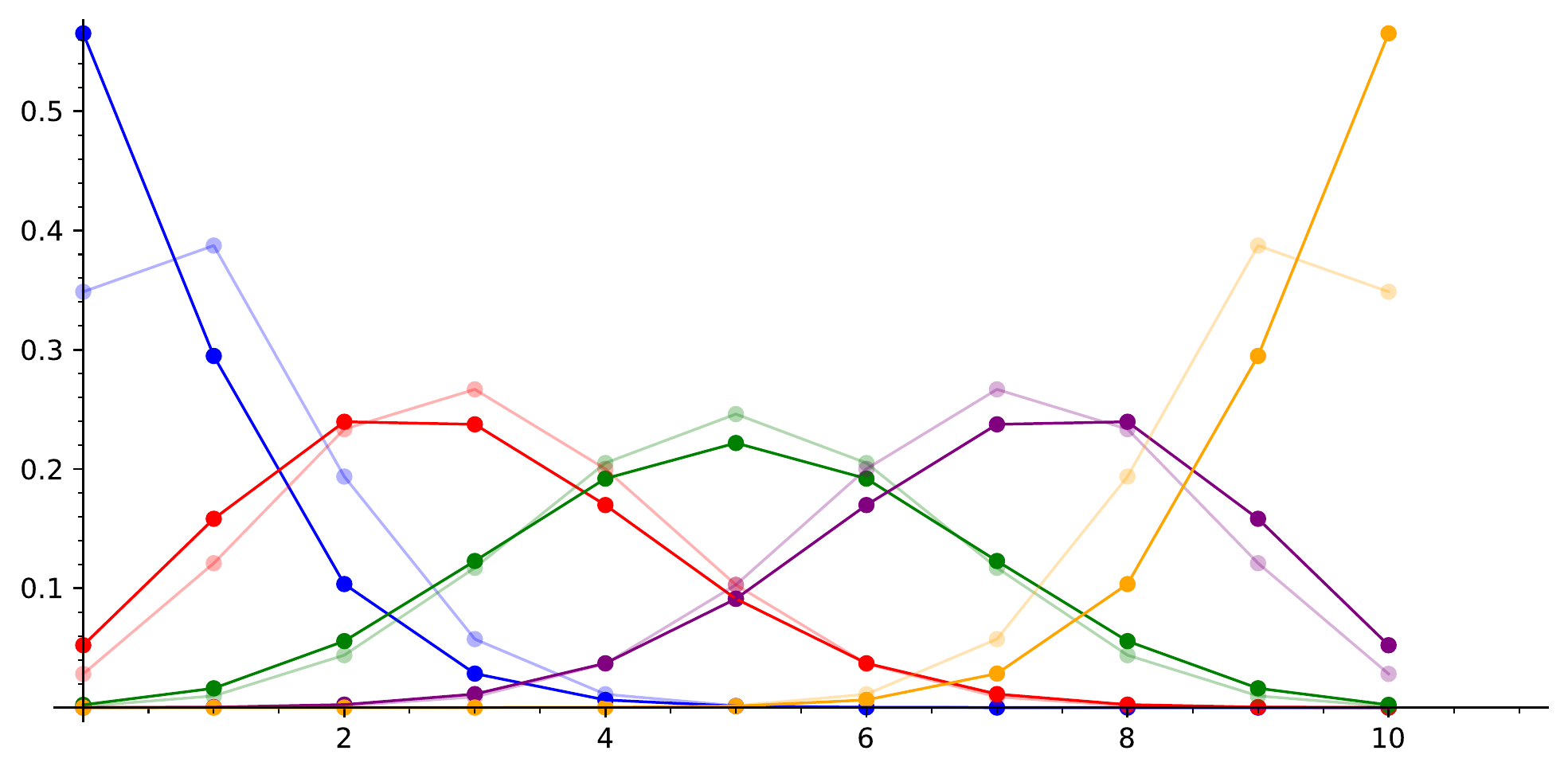} & \includegraphics[scale = 0.4]{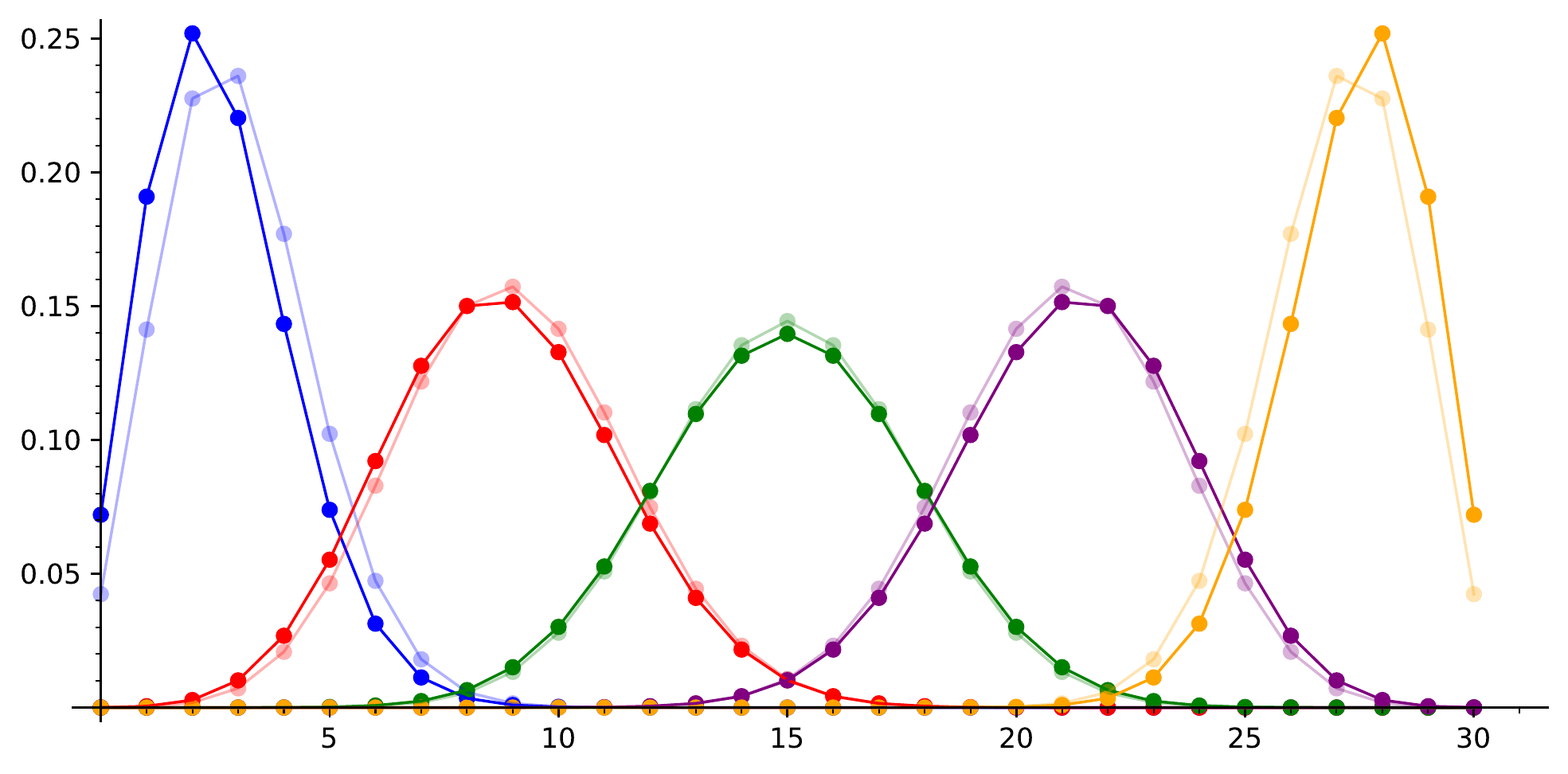} \\
			$(a)$&$(b)$\\
			\includegraphics[scale = 0.4]{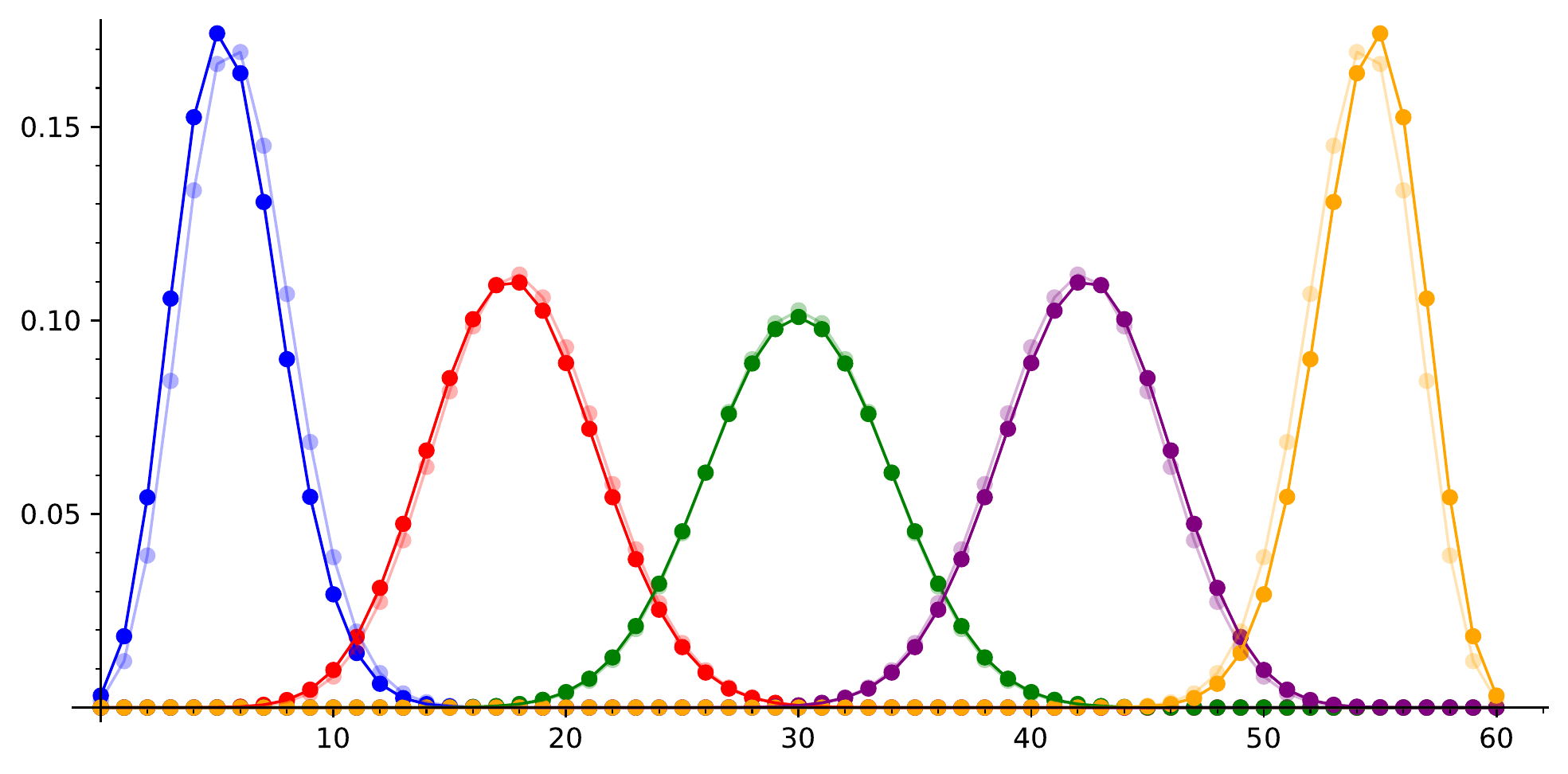} & \includegraphics[scale = 0.4]{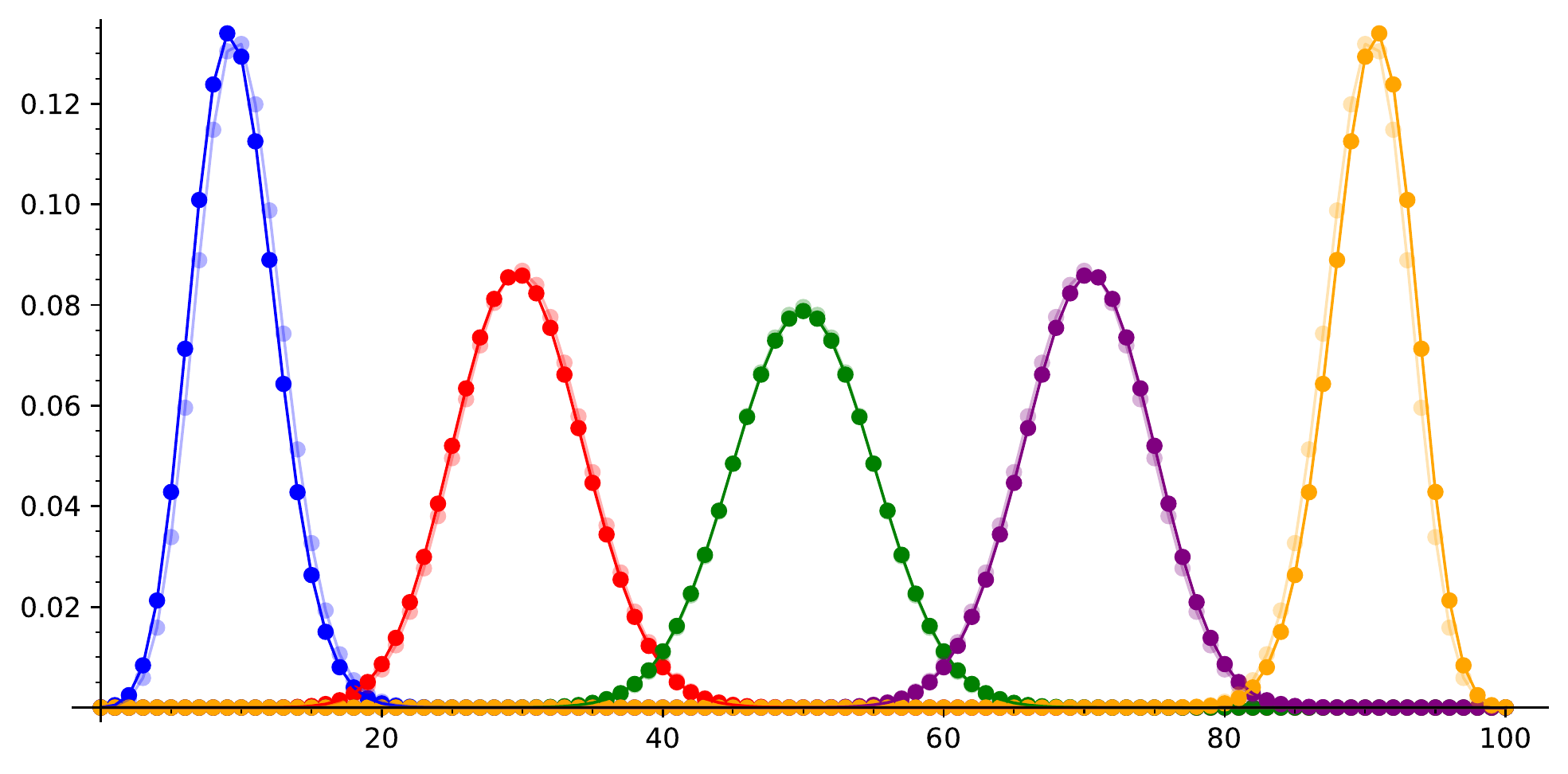} \\
			$(c)$ & $(d)$
		\end{tabular} 
		\caption{In this figure we plot some simulations of $B_n$ (dark line) and its approximating binomial distribution (light line).In all the plots we consider $\varepsilon = 1/n^2$ and  $q=0.1$ in blue; $q=0.3$ in red; $q=0.5$ in green; $q=0.7$ in purple and $q=0.9$ in yellow.  Plot (a) is made for $n=10$;  Plot (b) is made for $n=30$; Plot (c) is made for $n=60$ and Plot (d) for $n=100$.}
		\label{table:2} 
	\end{table} 
\end{center}

\section{Conclusions and Future work}
In this work we present a model to study the behaviour of the understanding of a student along several sessions of a course taught totally online and with no interaction between other students. In particular we study the case where the dependence between sessions is relatively small compared to the total number of sessions, as in seminars or panoramic courses. We obtained a recursive expression for the distribution of the number of sessions that the student understands along the course and showed that when the dependence parameter is small, this distribution has a binomial approximation. The speed of convergence of the approximation depends on the number of sessions and the quality of them. Even though this is a simple model, it can be fruitfully extended to consider many more situations, some of which we list below:

\begin{itemize}
	\item The environment we considered assumes the student has no interaction with their classmates. However several studies have shown that collaborative learning provides better results for students. It would be very interesting to modify the model to consider this situation.
	\item We studied the case when $\varepsilon$ is constant and relatively small compared to $n$, but this may not always be the case, as in some science classes. Therefore it would be useful to consider the cases when $\varepsilon$ changes according to the sessions themselves or according to the number of sessions previously understood.
	
	\item All the results obtained in this work were made for $q$ constant, but the value of $q$ can vary along the course due to exhaustion and motivation of the student and the teacher.
	
	\item Even though the distribution of $B_n$ can be approximated by a binomial distribution, we were not able to provide similar results for the corresponding mean and variance.

	\item It is interesting to test the model with real world data and study or develop some statistical procedures in order for the model to be fitted and validated.

\end{itemize}

\vspace{.5cm}
\vspace{.5cm}

\bibliography{biblioising}
\bibliographystyle{abbrv}

\end{document}